\newtheorem{thm}{Theorem}
\newtheorem{lem}[thm]{Lemma}
\newtheorem{rem}[thm]{Remark}
\def\ed#1{ {\mathbf 1}_{ \{#1  \}}}             % indicator
\def\AA{{\mathcal A}}
\def\BB{{\mathcal B}}
\def\E{{\mathbb E}\,}
\def\JJ{{\mathcal J}}
\def\MM{{\mathcal M}}
\def\N{{\mathbb N}}
\def\P{{\mathbb P}}
\def\R{{\mathbb R}}
\def\SS{{\mathcal S}}
\def\Var{\textrm{Var}\,}
\def\tY{{\widetilde Y}}
\title{Extrema of multinomial assignment process}
\author{
Mikhail Lifshits\footnote{St.Petersburg State University. Russia, 191023, St.Petersburg, University Emb. 7/9. \texttt{mikhail@lifshits.org}} 
\ and 
Gilles Mordant\footnote{Institut fur Mathematische Stochastik, Universit\"at G\"ottingen.
G\"ottingen, Germany. \texttt{gilles.mordant@uni-goettingen.de}}
}
\begin{document}

\maketitle

\begin{abstract}
 We study the asymptotic behavior of the expectation of the maxima and minima of random assignment process generated by a large matrix with multinomial entries. A variety of results is obtained for different sparsity regimes.
\end{abstract}
\medskip

\noindent {\bf Key words.} Expected maxima, minima, multinomial distribution, random assignment process.
\medskip

\noindent {\bf AMS subject classifications.} 60C05 (Primary), 05C70, 60K30 (Secondary).

\section{Introduction and main results}

\subsection{Random assignment problem}
We consider the following \textit{random assignment problem}.  Let ($X_{ij}$) be an $n\times n$ random matrix and let $[1..n]$ denote the set $\{1,2, \ldots, n\}$.
Let $\SS_n$ denote the group of permutations
$\sigma : [1..n] \mapsto [1..n]$. For every $\sigma\in \SS_n$, let
\[
   S(\sigma)=\sum\limits_{i=1}^{n} X_{i\sigma(i)}.
\]

The process $\{S(\sigma),\, \sigma \in \SS_n\}$ is called a \textit{random assignment process}.
The problem consists in the study of the asymptotic behaviour of its extrema, in particular,
\begin{equation} \label{minmax}
   \E \max\limits_{\sigma\in \SS_n} S(\sigma) \qquad \textrm{and } \qquad
   \E \min\limits_{\sigma\in \SS_n} S(\sigma),  \qquad \textrm{as } n \to \infty.
\end{equation}
We refer to \cite{ CoppersmithSorkin,Steele}
for many applications of assignment processes and their extrema in various fields of mathematics.
\medskip

There are many remarkable results in the area, including a famous result of Aldous  \cite{AldousZet2}
who proved a conjecture  by  M\'ezard and Parisi claiming that
\[
   \lim_{n\to\infty} \E \min\limits_{\sigma\in \SS_n} S(\sigma) = \frac{\pi^2}{6}
\]
when the $X_{ij}$ are i.i.d.\ standard exponential. Actually, he showed that, when the random variables considered are nonnegative, the distribution of
$X_{ij}$ affects the limit in the minimisation problem only through the value of its probability density function at 0.

In the mentioned case, the common distribution is bounded from below. The situation is very different when one deals with the variables having unbounded
distributions.  For obvious reasons,  it is more convenient to illustrate this phenomenon for maxima instead of minima. If the common law of the entries is not bounded from above, then the expectation of maxima does not tend anymore to a finite limit but grows to infinity and the problem consists in evaluation of the
corresponding growth order. In this direction, Mordant and Segers \cite{MS} showed that
if  $X_{ij}$ are i.i.d. standard Gaussian, then
\[
    \E{\max_{\sigma\in \SS_n} S(\sigma)} = n \sqrt{2 \log{n}} (1 + o(1)).
\]
Some rather general results of this type were recently obtaind by Cheng et al. \cite{TTkocz} and
Lifshits and Tadevosian \cite{LT}.

Not so much is known for the assignment problem in the discrete setting. One may mention the case
of i.i.d. Poisson random variables studied in \cite{LT} and a work of Parviainen \cite{Parv}
who considered uniform distributions on $[1..n]$, or on $[1..n^2]$, random permutations
of $[1..n]$ for each row, and those of $[1..n^2]$ for the whole matrix.

In this article, we study \eqref{minmax} for random matrices $X=(X_{ij})_{1\le i,j\le n}$ with the joint {\it multinomial}
distribution of entries $\MM(m,n^2)$. Therefore, the matrix entries are integer-valued, negatively dependent random
variables with common  binomial distribution $\BB(m,p)$ with success probability $p=n^{-2}$ and number of trials $m$.
We allow the dependence $m=m(n)$. As one will see, the presence of this extra parameter $m$ creates a space
for a variety of asymptotic behaviors for the expectation of the extrema.

\subsection{A motivating example} Let us give an example showing how the studied problem emerges in
information transmission. Let
$\AA=(a_1, ...,a_n)$ be an alphabet of $n$ letters. If $u$ and $v$ are two
independent uniformly distributed words of length $m$, the $n\times n$ matrix $X$ defined by
\[
   X_{ij} := \sum_{k=1}^m \ed{u_k=a_i,v_k=a_j}, \qquad 1\le i,j\le n,
\]
is distributed according to the multinomial law $\MM(m,n^2)$. Recall that  Hamming distance between the words is defined by
\[
   d_H(u,v) := \sum_{k=1}^m \ed{u_k \not =v_k} = m- \sum_{k=1}^m \ed{u_k =v_k}
   = m - \sum_{i=1}^n X_{ii}.
\]

Assume that we have received a word $v$ through a noisy channel and we have to decide whether $v$
is just a random word or a word $u$ that passed through an unknown coding $\sigma:\AA\mapsto\AA$.
The answer should clearly depend on the quantity
\[
     \min_\sigma d_H(\sigma(u),v) =  \min_\sigma \left( m - \sum_{i=1}^{n} X_{i\sigma(i)}\right)
     =m - \max_\sigma \sum_{i=1}^{n} X_{i\sigma(i)}.
\]

\subsection{Results}

Our setting is an asymptotic one, i.e., we let $n\to\infty$ and allow $m=m_n$ to be a function of $n$.
The results depend heavily on the relation between $n$ and $m$.
Therefore, we consider separately several zones gradually going down from large $m$'s to the smaller ones.
Everywhere we use the notation  $p=p_n:=n^{-2}$ for the probability which is naturally related to our basic
multinomial law $\MM(m,n^2)$. All limits are meant for $n\to \infty$.

\subsubsection*{Quasi-Gaussian zone}

This zone is defined by assumption
\begin{equation} \label{subgaussian}
   \frac{mp}{\log n} \to \infty
\end{equation}
which essentially means that all entries $X_{ij}$ are sufficiently large to be heuristically approximated
with Gaussian variables.

\begin{thm} \label{t:subgaussian}
  Under assumption \eqref{subgaussian} it is true that
\begin{eqnarray*}
   \E \max_\sigma \sum_{i=1}^n X_{i\sigma(i)} &=& \frac{m}{n} \, (1+o(1)),
\\
    \E \min_\sigma \sum_{i=1}^n X_{i\sigma(i)} &=& \frac{m}{n} \, (1+o(1)).
\end{eqnarray*}
\end{thm}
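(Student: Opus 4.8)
The plan is to show that in this regime the assignment process concentrates so tightly around its mean that both extrema are governed by the trivial bound coming from the total sum. The key structural fact is that for \emph{every} permutation $\sigma$,
\[
   \sum_{i=1}^n X_{i\sigma(i)} \le \sum_{i,j} X_{ij} = m,
\]
since the matrix has a fixed total $m$ (this is what ``multinomial $\MM(m,n^2)$'' means). Hence $\max_\sigma S(\sigma)\le m$ deterministically, and there is a gap of order $n$ to close: the claim is that the max is only $m/n\,(1+o(1))$, not close to $m$.

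First I would establish the upper bound on the max and the lower bound on the min by a union bound over $\SS_n$ combined with a large-deviation estimate for a single $S(\sigma)$. For fixed $\sigma$, $S(\sigma)=\sum_i X_{i\sigma(i)}$ is a sum of $n$ identically distributed $\BB(m,p)$ variables that are \emph{negatively associated} (a standard property of multinomials), with mean $np\cdot m=m/n$. Negative association lets us use the usual Chernoff/Bernstein bounds as if the summands were independent, so $\P(S(\sigma)\ge (1+\eps)\tfrac m n)\le \exp(-c\eps^2 \tfrac m n)$ for $\eps$ small, and there are $n!\le \exp(n\log n)$ permutations. Since assumption \eqref{subgaussian} says $mp/\log n=m/(n^2\log n)\to\infty$, i.e.\ $m/n \gg n\log n$, the exponent $c\eps^2 m/n$ dominates $n\log n$, so the union bound gives $\P(\max_\sigma S(\sigma)\ge (1+\eps)\tfrac m n)\to 0$ for any fixed $\eps>0$; the symmetric tail handles the min. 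To pass from this high-probability statement to the expectation I would combine it with the deterministic bound $0\le S(\sigma)\le m$: the contribution of the bad event to $\E\max_\sigma S(\sigma)$ is at most $m\cdot\P(\text{bad})$, which one checks is $o(m/n)$ because the probability decays exponentially in a quantity much larger than $n\log n$ (so in particular faster than any power of $n$, and in fact fast enough to kill the factor $m$; if one worries about $m$ growing super-exponentially one first truncates, or notes $m\cdot e^{-cm/n}=o(m/n)$ holds whenever $m/n\to\infty$).

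For the matching lower bound on $\E\max_\sigma S(\sigma)$ and upper bound on $\E\min_\sigma S(\sigma)$, the cleanest route is averaging rather than a second-moment argument: for any random matrix,
\[
   \max_\sigma S(\sigma)\ \ge\ \frac{1}{n!}\sum_{\sigma\in\SS_n} S(\sigma)\ \ge\ \min_\sigma S(\sigma),
\]
and by symmetry $\tfrac1{n!}\sum_\sigma S(\sigma)=\tfrac1{n!}\sum_\sigma\sum_i X_{i\sigma(i)}=\tfrac1n\sum_{i,j}X_{ij}=\tfrac m n$ exactly, so taking expectations gives $\E\max_\sigma S(\sigma)\ge m/n\ge\E\min_\sigma S(\sigma)$ \emph{with no error term at all}. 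Combining the two halves yields both asserted asymptotics simultaneously.

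The main obstacle I anticipate is not the union bound itself but making the negative-association machinery rigorous and tracking constants in the Bernstein inequality uniformly over the range of $m$ allowed by \eqref{subgaussian} — in particular verifying that the single-permutation deviation probability really is at most $\exp(-c\eps^2 m/n)$ for $\BB(m,n^{-2})$ summands (the relevant regime has small mean per entry, so one should use the Poisson-type form of Bernstein's inequality, $\P(S\ge(1+\eps)\mu)\le\exp(-\mu h(\eps))$ with $h(\eps)=(1+\eps)\log(1+\eps)-\eps$, rather than the sub-Gaussian form), and then checking that $n\log n=o(\mu h(\eps))$ under \eqref{subgaussian}. The expectation clean-up on the bad event is a minor point given the deterministic two-sided bound $0\le S(\sigma)\le m$.
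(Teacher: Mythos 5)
Your argument is correct, but it follows a genuinely different route from the paper. You control the maximum by a union bound over all $n!$ permutations, applying a Poisson-form Chernoff bound to the full permutation sum $S(\sigma)$ — this requires invoking negative association of the multinomial to multiply the entrywise moment generating functions, and then a truncation step (using $0\le S(\sigma)\le m$) to convert the tail bound into an expectation bound; your converse bounds come from the exact averaging identity $\max_\sigma S(\sigma)\ge \frac{1}{n!}\sum_\sigma S(\sigma)=\frac mn\ge\min_\sigma S(\sigma)$, which holds deterministically because the total mass is exactly $m$. The paper instead works row by row: it bounds $\E\max_\sigma\sum_i X_{i\sigma(i)}\le\sum_i\E\max_{j}X_{ij}$ and shows $\E\max_{1\le j\le n}X_{1j}\le mp\,(1+o(1))$ (and the mirror bound for the row minimum) via Jensen and the exact binomial exponential moment with $\gamma=(2\log n/(mp))^{1/2}$, an argument valid under \emph{arbitrary} dependence, so no negative-association input is needed for this theorem; the two one-sided bounds are then sandwiched exactly as you sandwich yours. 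What each approach buys: yours exploits the fixed total $m$ to get the converse bounds with no error term and yields genuine concentration of the extrema at scale $\eps m/n$, while the paper's row-wise scheme is dependence-free and is the template reused (with the greedy method) in all the sparser regimes. One small inaccuracy in your write-up: the fallback claim that $m\,e^{-cm/n}=o(m/n)$ ``whenever $m/n\to\infty$'' is false in general (it needs $cm/n-\log n\to\infty$), but this is harmless here since \eqref{subgaussian} gives $m/n\gg n\log n$, which is far more than enough.
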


\subsubsection*{Critical zone}

The critical zone is described by assumption
\begin{equation} \label{critical}
    \frac{m p}{\log n} \to c
\end{equation}
with some $c>0$. Unlike to the
 quasi-Gaussian case, the expectation behavior of maxima and minima
is not the same anymore.

\begin{thm} \label{t:critical}
  Under assumption \eqref{critical}  for all $c>0$ it is true that
\[
     \E \max_\sigma \sum_{i=1}^n X_{i\sigma(i)} = c \, H_* n \log n \, (1+o(1)),
\]
where $H_*=H_*(c)$ is the unique solution of equation
\begin{equation} \label{Hstar}
    \begin{cases}
    H \log H- (H-1) = \frac 1c, &\\
    H>1, \\
    \end{cases}
\end{equation}
and for  all $c>1$ it is true that
\[
    \E \min_\sigma \sum_{i=1}^n X_{i\sigma(i)} =  c\, \widetilde H_* n \log n \, (1+o(1)),
\]
where $\widetilde{H}_*=\widetilde{H}_*(c)$ is the unique solution of equation
\begin{equation} \label{tHstar}
    \begin{cases}
    H \log H- (H-1) = \frac 1c, &\\
    0<H<1. \\
    \end{cases}
\end{equation}
\end{thm}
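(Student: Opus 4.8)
The proof rests on three ingredients: sharp large‑deviation asymptotics for binomial tails (equivalently Poisson tails, since $p\to0$), a union bound over the $n!$ permutations, and the classical threshold for the existence of a perfect matching in a sparse random bipartite graph. First note that $I(H):=H\log H-(H-1)$ satisfies $I'(H)=\log H$, so $I$ strictly decreases on $(0,1)$ from $I(0^{+})=1$ to $I(1)=0$ and strictly increases on $(1,\infty)$ to $+\infty$; hence $I(H)=\tfrac1c$ has a unique root $H_{*}>1$ for every $c>0$, and a unique root $\widetilde H_{*}\in(0,1)$ exactly when $\tfrac1c<1$, i.e.\ $c>1$. This makes \eqref{Hstar}, \eqref{tHstar} well posed and explains the restriction $c>1$ for the minimum. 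We use repeatedly the two scalings $\E X_{ij}=mp=c\log n\,(1+\oh(1))$ and, for each fixed $\sigma$, $S(\sigma)\sim\BB(m,1/n)$ with $\E S(\sigma)=m/n=cn\log n\,(1+\oh(1))$, together with the large‑deviation rate $\P\bigl(\BB(N,q)\ge HNq\bigr)=\exp(-Nq\,I(H)(1+\oh(1)))$ for $H>1$, and the same with ``$\le$'' for $0<H<1$, valid as $q\to0$ with $Nq\to\infty$; here the subexponential prefactor is a power of $Nq$, hence $n^{\oh(1)}$.

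\textbf{Upper bound for the maximum and lower bound for the minimum (union bound).} Fix $a>H_{*}$, so $cI(a)>1$. Applying the tail estimate to $S(\sigma)\sim\BB(m,1/n)$ gives, uniformly in $\sigma$, $\P\bigl(S(\sigma)\ge a\,\tfrac mn\bigr)=\exp(-cI(a)\,n\log n\,(1+\oh(1)))$, so a union bound over $\SS_{n}$ using $|\SS_{n}|=n!\le e^{n\log n}$ yields $\P\bigl(\max_{\sigma}S(\sigma)\ge a\,\tfrac mn\bigr)\le\exp\bigl(n\log n\,(1-cI(a)+\oh(1))\bigr)\to0$. On the complementary event use $\max_{\sigma}S(\sigma)\le\sum_{i,j}X_{ij}=m$; since that event has super‑polynomially small probability its contribution to the expectation is negligible, whence $\E\max_{\sigma}S(\sigma)\le a\,\tfrac mn\,(1+\oh(1))$, and $a\downarrow H_{*}$ gives the upper half of the first assertion. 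The lower bound for the minimum is symmetric: for $a<\widetilde H_{*}$ one has $cI(a)>1$, the lower‑tail estimate and the same union bound give $\min_{\sigma}S(\sigma)\ge a\,\tfrac mn\,(1+\oh(1))$ with probability $1-\oh(1)$, and since $\min_{\sigma}S(\sigma)\ge0$ this passes to the expectation; letting $a\uparrow\widetilde H_{*}$ finishes.

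\textbf{Lower bound for the maximum and upper bound for the minimum (matching).} Fix $H<H_{*}$, so $cI(H)<1$. Put $t_{n}:=\lceil H\,mp\rceil\sim Hc\log n$ and let $G$ be the bipartite graph on $[1..n]\sqcup[1..n]$ joining $i$ to $j$ whenever $X_{ij}\ge t_{n}$. Realise the multinomial entries as i.i.d.\ $\mathrm{Pois}(mp)$ variables conditioned on total count $m$; then the indicators $\ed{X_{ij}\ge t_{n}}$ become independent $\mathrm{Bernoulli}(q_{n})$ with $q_{n}=\P(\mathrm{Pois}(mp)\ge t_{n})=n^{-cI(H)+\oh(1)}$, so $nq_{n}=n^{1-cI(H)+\oh(1)}\to\infty$, polynomially fast. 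The classical fact that $G(n,n,q)$ contains a perfect matching with probability $1-\Oh(ne^{-nq})$ once $nq-\log n\to\infty$ then shows the Poissonised graph has a perfect matching with probability $1-\oh(m^{-1/2})$, which transfers to the conditioned (multinomial) model because the conditioning event $\{\sum_{i,j}Y_{ij}=m\}=\{\mathrm{Pois}(m)=m\}$ has probability $\asymp m^{-1/2}$. On the matching event some $\sigma$ has $X_{i\sigma(i)}\ge t_{n}$ for all $i$, hence $\max_{\sigma}S(\sigma)\ge nt_{n}=Hcn\log n\,(1+\oh(1))$; as $\max_{\sigma}S(\sigma)\ge0$ and the event has probability $1-\oh(1)$, this bounds $\E\max_{\sigma}S(\sigma)$ from below, and $H\uparrow H_{*}$ completes the first assertion. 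For the minimum with $c>1$, fix $H\in(\widetilde H_{*},1)$ (so $cI(H)<1$) and run the same argument with the graph joining $i\sim j$ when $X_{ij}\le t_{n}$, $t_{n}:=\lfloor H\,mp\rfloor$; the edge probability is $\P(\mathrm{Pois}(mp)\le t_{n})=n^{-cI(H)+\oh(1)}$ by the lower‑tail rate, a perfect matching exists with probability $1-\oh(1)$, and on it some $\sigma$ has $S(\sigma)\le nt_{n}=Hcn\log n\,(1+\oh(1))$; the complementary event contributes $\le m\,\P_{\MM}(\text{no matching})=\oh(m^{1/2})=\oh(n\log n)$ to $\E\min_{\sigma}S(\sigma)$, so $H\downarrow\widetilde H_{*}$ finishes.

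\textbf{Main obstacle.} The union bound and the matching threshold are standard; the delicate point is that everything takes place \emph{exactly} at the criticality $cI(H)=1$, so the large‑deviation estimates must be controlled with $n^{\oh(1)}$ precision rather than merely to logarithmic order — one really needs the sharp form $\P(\mathrm{Pois}(\lambda)\ge H\lambda)=\Theta(\lambda^{-1/2}e^{-\lambda I(H)})$, not a crude Chernoff bound. The strict inequalities $a>H_{*}$ and $H<H_{*}$ (and their analogues for the minimum) supply the fixed positive margin $1-cI(\cdot)$ or $cI(\cdot)-1$ that absorbs the $\oh(1)$ errors. Relatedly, the Poissonisation transfer must be \emph{quantitative}: the ``no perfect matching'' probability has to be shown $\oh(m^{-1/2})$, i.e.\ super‑polynomially small, which holds precisely because $nq_{n}=n^{1-cI(H)+\oh(1)}$ is polynomially large — this is exactly where the strong form of the hypothesis $H<H_{*}$ is used.
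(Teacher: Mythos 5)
Your proposal is correct in substance but follows a genuinely different route from the paper, so a comparison is in order. The paper works row by row: it proves the one‑row estimates \eqref{Emax_critical}--\eqref{Emin_critical} for $n$ negatively associated $\BB(m,p)$ variables (Chernoff for the upper bound, Poisson pointwise asymptotics plus the Christofides--Vaggelatou disintegration \cite{CV} to pass from independent copies to the negatively associated entries for the lower bound), and then assembles the assignment bounds via $\E\max_\sigma\sum_i X_{i\sigma(i)}\le\sum_i\E\max_j X_{ij}$ in one direction and the greedy method of \cite{MS}, justified by Lemma~\ref{l:random_sets}, in the other. You instead (i) exploit that the multinomial aggregation property makes each $S(\sigma)$ exactly $\BB(m,1/n)$ and run a union bound over all $n!$ permutations; the constant comes out right because the Poissonian rate is additive, so $\tfrac mn I(H)$ against the entropy $\log n!\sim n\log n$ reproduces the same criticality $cI(H)=1$ as the paper's per-entry computation against $\log n$; and (ii) for the converse bounds you threshold the matrix at $Hmp$ and extract a perfect matching in the resulting bipartite graph via Poissonization plus the Erd\H{o}s--R\'enyi matching threshold \cite{ER} -- structurally the same device the paper itself uses for Theorem~\ref{t:ER}, here imported into the critical zone. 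What each approach buys: yours bypasses negative association and the greedy/Lemma~\ref{l:random_sets} machinery entirely and gives with-high-probability (not only expectation) statements; the paper's route needs no matching theorem and no de-Poissonization, only one-row estimates plus slow variation of $\log$.

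Two points in your argument deserve to be made precise rather than asserted. First, the indicators $\ed{X_{ij}\ge t_n}$ are independent only in the unconditioned Poissonized model; your transfer through $\P_{\MM}(\cdot)\le \P(\Sigma=m)^{-1}\P_{\mathrm{Pois}}(\cdot)$ with $\P(\Sigma=m)\asymp m^{-1/2}$ handles this, but it forces the second point: you need the no-perfect-matching probability in $G(n,n,q_n)$ to be $o(m^{-1/2})$, i.e.\ super-polynomially small, and you quote a quantitative bound $\Oh(ne^{-nq})$ as ``classical''. The reference \cite{ER} as used in the paper only gives the limit statement; in your regime $nq_n=n^{1-cI(H)+o(1)}$ a Hall-condition union bound of the form
\[
  \P(\text{no perfect matching})\ \le\ \sum_{k=1}^{n}\binom nk\binom{n}{k-1}(1-q_n)^{k(n-k+1)}\ \le\ \exp\bigl(-c' n^{1-cI(H)+o(1)}\bigr)
\]
does the job and should be included (or a precise quantitative citation given), since the whole de-Poissonization step hinges on it. With that supplied, and with the sharp single-term Poisson estimate you already invoke to get $q_n\ge n^{-cI(H)+o(1)}$ (the same computation as the paper's Step~2), the proof is complete.
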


For $c<1$ equation \eqref{tHstar} has no solution and the result for the minimum is completely different,
as  stated in the next theorem.

\begin{thm}  \label{t:ER}  Let $c<1$ and
\begin{equation} \label{alpha}
     \limsup \frac{mp}{\log n} \le c.
\end{equation}
Then,
\[
   \lim \P\left( \min_\sigma \sum_{i=1}^n X_{i\sigma(i)} =0 \right) =1.
\]
\end{thm}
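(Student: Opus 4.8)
First I would recast the event $\{\min_\sigma S(\sigma)=0\}$ as the existence of a perfect matching and then bound the probability of its failure by a union bound. Note that $\min_\sigma\sum_{i}X_{i\sigma(i)}=0$ exactly when some $\sigma\in\SS_n$ has $X_{i\sigma(i)}=0$ for all $i$, i.e.\ exactly when the $0$--$1$ matrix $Z=(Z_{ij})$, $Z_{ij}:=\ed{X_{ij}=0}$, satisfies $\operatorname{perm}(Z)\ge1$. By the Frobenius--König theorem (the defect form of Hall's theorem), $\operatorname{perm}(Z)=0$ iff $Z$ contains an all-zero $r\times s$ submatrix with $r+s=n+1$; equivalently, $\min_\sigma S(\sigma)\ge1$ iff there are rows $R$ and columns $C$ with $|R|=r$, $|C|=n+1-r$ (necessarily $1\le r\le n$) such that every cell of $R\times C$ is \emph{occupied}, meaning it carries at least one of the $m$ balls. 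Hence it suffices to prove
\[
   \P\Bigl(\min_\sigma S(\sigma)\ge1\Bigr)\ \le\ \sum_{r=1}^{n}\binom{n}{r}\binom{n}{n+1-r}\,\pi_r\ \longrightarrow\ 0,
\]
where $\pi_r$ is the probability that a fixed $r\times(n+1-r)$ submatrix is entirely occupied.

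Next I would bound $\pi_r$ using negative dependence of the multinomial vector. The occupancy numbers $(X_{ij})$ are negatively associated (a classical fact, see Joag-Dev and Proschan), and the indicators $\ed{X_{ij}\ge1}$ are non-decreasing functions of single coordinates; hence for any set $T$ of $k$ cells, $\P(X_{ij}\ge1\ \forall (i,j)\in T)\le\prod_{(i,j)\in T}\P(X_{ij}\ge1)=q^{k}$ with $q:=1-(1-p)^{m}$. In particular $\pi_r\le q^{\,r(n+1-r)}$. To estimate $q$, fix $c',c''$ with $c<c'<c''<1$. By assumption \eqref{alpha} we have $mp\le c'\log n$ for all large $n$, and since $\log(1-p)\ge-p(1+p)$ this gives $(1-p)^m\ge e^{-mp(1+p)}\ge n^{-c''}$ for all large $n$. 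Therefore $\log q\le-(1-p)^m\le-n^{-c''}$, so that $q^{k}\le e^{-k\,n^{-c''}}$ for every $k$.

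Finally I would run the union bound. The sum is symmetric under $r\mapsto n+1-r$ (both the binomial product and $r(n+1-r)$ are invariant), so up to a factor $2$ it suffices to treat $1\le r\le(n+1)/2$. In this range $\binom{n}{n+1-r}=\binom{n}{r-1}\le\binom{n}{r}\le(en/r)^{r}$, so with $k=r(n+1-r)$ the $r$-th term is at most
\[
   \Bigl(\tfrac{en}{r}\Bigr)^{2r}e^{-r(n+1-r)\,n^{-c''}}\ =\ \exp\!\Bigl(r\bigl[\,2\log(en/r)-(n+1-r)\,n^{-c''}\bigr]\Bigr).
\]
Since $n+1-r\ge n/2$ here and $2\log(en/r)\le 2\log(en)$, the bracket is at most $2\log(en)-\tfrac12 n^{1-c''}$; as $c''<1$ this is below $-\tfrac14 n^{1-c''}$ for all large $n$, uniformly in $r$. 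Thus each term is at most $e^{-\frac14 n^{1-c''}}$ and the whole sum is at most $2n\,e^{-\frac14 n^{1-c''}}\to0$, giving $\P(\min_\sigma S(\sigma)\ge1)\to0$ and hence the theorem.

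The only delicate point is the bound on $\pi_r$: the admissible $m$ may be as large as order $n^{2}\log n$, so a typical cell is occupied with probability close to $1$ and a mere count of the balls landing in the submatrix is far too weak; negative association of the multinomial is precisely what converts this into the clean product bound $q^{k}$ valid throughout the whole range of $m$. Everything else — the Frobenius--König reformulation and the union-bound arithmetic — is routine.
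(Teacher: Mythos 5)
Your proof is correct, but it follows a genuinely different route from the paper. You translate $\{\min_\sigma S(\sigma)\ge 1\}$ via Frobenius--K\"onig into the existence of a fully occupied $r\times(n+1-r)$ block, bound the probability of each such block by the product bound $q^{r(n+1-r)}$ coming from negative association of the multinomial, and finish with a union bound; each ingredient checks out (the NA product inequality for the non-decreasing indicators $\ed{X_{ij}\ge 1}$ over disjoint coordinates is standard, the estimate $(1-p)^m\ge n^{-c''}$ uses exactly the hypothesis $c<1$, and the combinatorial factor $\binom{n}{r}\binom{n}{n+1-r}$ is crushed by $e^{-r(n+1-r)n^{-c''}}$). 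The paper instead works with the complementary $0$--$1$ matrix $\tY_{ij}=\ed{X_{ij}=0}$ and invokes the Erd{\H o}s--R\'enyi theorem on perfect matchings in a uniformly random bipartite graph with a prescribed number of edges: it conditions on the number $S$ of empty cells, shows by a Chebyshev (second-moment) argument that $S\ge n^\beta$ with high probability for some $\beta\in(1,2-c)$, and uses monotonicity of the matching probability in the number of edges. Your argument is self-contained (no external matching theorem, no conditioning or monotonicity step) and even yields a stretched-exponential rate $\Oh(n\,e^{-n^{1-c''}/4})$ for the failure probability, at the cost of the Hall/K\"onig reformulation and the union-bound arithmetic; the paper's route is shorter modulo the classical Erd{\H o}s--R\'enyi result and reuses the negative-dependence toolkit only for a variance bound.
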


\begin{rem} {\rm
 The intermediate case $c=1$ admits a similar treatment but the result is
 less attractive. For example, one may replace assumption \eqref{alpha} with
\[
    \frac{mp}{\log n} \le 1 - \frac{\log(b\log n)}{\log n}, \qquad b>1.
\]
 }
\end{rem}

\subsubsection*{Quasi-Poissonian zone}

The quasi-Poissonian zone is described by the assumptions
\begin{equation} \label{Poisson}
    \frac{m p}{\log n} \to 0
\end{equation}
while, for every $\delta>0$,
\begin{equation} \label{delta}
  mp\gg n^{-\delta}.
\end{equation}
In this zone all entries $X_{ij}$ are well approximated by
Poissonian variables with intensity parameter $mp$.
This zone includes moderately growing intensities $mp$, the constant $mp$ and even a narrow zone
of $mp$ slowly decreasing to zero, e.g., with logarithmic speed.

\begin{thm} \label{t:Poisson}
Under assumptions \eqref{Poisson} and \eqref{delta}
it is true that
\begin{equation} %% \label{Emax_Poisson}
  \E \max_\sigma \sum_{i=i}^{n} X_{i\sigma(i)}
  =
    \frac{n\, \log n}{\log\left(\frac{\log n}{mp}\right)}\, (1+o(1)).
\end{equation}
\end{thm}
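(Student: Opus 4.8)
The target quantity is $n\,r^*$ with $r^*=r_n^*:=\dfrac{\log n}{\log\bigl(\log n/(mp)\bigr)}$, and $r^*$ plays the role of the typical size of an entry used in an optimal assignment. Write $\lambda:=mp$. The plan is to prove matching lower and upper bounds of the form $n\,r^*\,(1+o(1))$. First I would record the elementary asymptotics of $r^*$ forced by \eqref{Poisson} and \eqref{delta}. With $L:=\log(\log n/\lambda)$, assumption \eqref{Poisson} gives $\lambda=o(\log n)$, hence $L\to\infty$ and $r^*=\log n/L=o(\log n)$; assumption \eqref{delta} gives $\log(1/\lambda)=o(\log n)$, hence $L=o(\log n)$ and $r^*\to\infty$; and a short computation ($\lambda L=(\log n)\,L/(\log n/\lambda)\cdot\cdots$) yields $\lambda L=o(\log n)$, i.e. $\lambda=o(r^*)$. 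The key identity to extract is
\[
   r^*\log\!\Bigl(\frac{r^*}{e\lambda}\Bigr)=\log n\,(1+o(1)),
\]
which follows from $\log(r^*/\lambda)=L-\log L=L(1+o(1))$ together with $r^*L=\log n$. Crucially, this asymptotic is \emph{robust}: replacing $r^*$ by $r^*(1+o(1))$ or multiplying the argument of the logarithm by any fixed constant leaves the right-hand side unchanged, because $\log(r^*/\lambda)\to\infty$. This robustness lets us choose thresholds generously.

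\textbf{Lower bound.} Fix the integer $r=r_n:=\lfloor\, \text{solution of } r\log(r/(e\lambda))+\lambda+\tfrac12\log(2\pi r)=\log n-2\log\log n\,\rfloor$; by monotonicity of $x\mapsto x\log(x/(e\lambda))$ and the robustness above, $r=r^*(1+o(1))$. Using $r=o(m^{1/2})$, $mp^2=o(1)$ and Stirling's formula one checks $\P(X_{11}\ge r)\ge\P(X_{11}=r)\ge c_0\,(\log n)^2/n$ for a constant $c_0>0$. Consider the bipartite graph $H$ on rows $\times$ columns with an edge at $(i,j)$ iff $X_{ij}\ge r$. The $X_{ij}$ are negatively associated and the events $\{X_{ij}\ge r\}$ depend on disjoint coordinates, so for any rectangle $A\times B$ one has $\P(\text{no edge in }A\times B)\le(1-\P(X_{11}\ge r))^{|A|\,|B|}$. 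Feeding this into the standard Hall-condition union bound — summing $\binom na\binom n{a-1}(1-\P(X_{11}\ge r))^{a(n-a+1)}$ over $|A|=a$, $|B|=n-a+1$, and symmetrically — and using $\P(X_{11}\ge r)\gg(\log n)^2/n$, one gets that $H$ has a perfect matching $\sigma_0$ with probability $1-o(1)$. On that event $\max_\sigma\sum_i X_{i\sigma(i)}\ge\sum_i X_{i\sigma_0(i)}\ge nr$, so $\E\max_\sigma\sum_i X_{i\sigma(i)}\ge nr\,(1-o(1))=n\,r^*\,(1+o(1))$.

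\textbf{Upper bound.} Fix $\eps>0$ and set $t=t_n:=(1+\eps)\,n\,r^*$. By exchangeability of the $\MM(m,n^2)$ entries, each $S(\sigma)$ has the law of $\sum_{i=1}^n X_{ii}$, so a union bound over $\SS_n$ gives $\P(\max_\sigma\sum_i X_{i\sigma(i)}\ge t)\le n!\,\P(\sum_{i=1}^n X_{ii}\ge t)$. By negative association, for $\theta\ge0$, $\E e^{\theta\sum_i X_{ii}}\le(\E e^{\theta X_{11}})^n=\bigl(1+p(e^\theta-1)\bigr)^{mn}\le\exp\!\bigl(n\lambda(e^\theta-1)\bigr)$, whence, optimising in $\theta$, the Poisson-type bound $\P(\sum_i X_{ii}\ge t)\le\exp\!\bigl(-t\log(t/(en\lambda))-n\lambda\bigr)$. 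Combining with $\log n!=n\log n\,(1+o(1))$ and the key identity (applied with argument $(1+\eps)r^*/(e\lambda)$, which is again $r^*/(e\lambda)$ up to the irrelevant constant $1+\eps$), the exponent becomes $n\log n\,(1+o(1))-(1+\eps)\,n\log n\,(1+o(1))=-(\eps+o(1))\,n\log n$, so $\P(\max_\sigma\sum_i X_{i\sigma(i)}\ge t)\to0$ super-exponentially. Since $\sum_i X_{i\sigma(i)}\le\sum_{i,j}X_{ij}=m\le n^2\log n$ deterministically, we obtain $\E\max_\sigma\sum_i X_{i\sigma(i)}\le t+m\,\P(\max_\sigma\sum_i X_{i\sigma(i)}\ge t)=(1+\eps)\,n\,r^*+o(1)$. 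Dividing by $n\,r^*\to\infty$ and letting $\eps\downarrow0$ gives $\limsup\,\E\max_\sigma\sum_i X_{i\sigma(i)}/(n\,r^*)\le1$, which together with the lower bound completes the proof.

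\textbf{Main obstacle.} The delicate part is the lower bound: one must push the threshold $r$ as close as possible to $r^*$ while still guaranteeing a perfect matching in $H$. This requires (i) applying the negative-association correlation inequalities in the correct (decreasing-events) direction so that the Hall-type union bound is legitimate despite the dependence among multinomial entries, and (ii) a sufficiently sharp lower bound on $\P(X_{11}\ge r)$ via the binomial-to-Poisson comparison, with explicit control of the Stirling correction, so that the admissible $r$ is genuinely $r^*(1+o(1))$ rather than merely of the right order of magnitude. The upper bound, by contrast, reduces to the Poisson moment-generating-function dominance plus the crude deterministic bound $\max_\sigma S(\sigma)\le m$, and is essentially routine.
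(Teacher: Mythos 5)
Your argument is correct, but it takes a genuinely different route from the paper on both sides of the estimate. The paper never works at the level of whole permutations: it reduces everything to the one\-/row quantity $\E\max_{1\le j\le n}X_{ij}$, proving the single-row asymptotics \eqref{Emax_Poisson} by a Chernoff bound (upper) and a Poisson local estimate transferred from independent copies to the negatively associated entries via the Christofides--Vaggelatou comparison \eqref{step4} (lower); the passage to the assignment process is then $\E\max_\sigma\le\sum_i\E\max_j X_{ij}$ for the upper bound and the greedy construction of $\sigma^*$, justified by the exchangeability Lemma~\ref{l:random_sets} and the slow variation of $\log n$, for the lower bound, exactly as in Theorem~\ref{t:critical}. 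You instead bound the maximum from above by a union bound over all $n!$ permutations, using exchangeability of the $\MM(m,n^2)$ cells so that each $S(\sigma)$ has the law of the diagonal sum, plus a Poisson-type Chernoff bound; this works precisely because $\log n!\sim n\log n$ matches the first-order rate, and it dispenses with both the row decomposition and Lemma~\ref{l:random_sets}. Your lower bound thresholds the matrix at level $r=r^*(1+o(1))$ chosen so that $\P(X_{11}\ge r)\ge c_0(\log n)^2/n$ and shows the resulting bipartite graph has a perfect matching with probability $1-o(1)$ via Hall's condition and a first-moment bound, with negative association supplying the product bound on empty rectangles; this is in the spirit of the Erd\H{o}s--R\'enyi matching argument that the paper uses only for Theorem~\ref{t:ER}, and it yields a high-probability statement for the maximum rather than only an expectation bound, at the price of the Stirling-level care in choosing $r$ that you correctly identify as the delicate point. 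Both routes use the same two ingredients (Poisson approximation of $\BB(m,p)$ and negative association of the multinomial law), so neither is substantially more elementary; yours is more self-contained for this single theorem, while the paper's single-row lemma is reused across its other regimes.
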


\begin{rem} {\rm
   Note that if $\log (mp) \ll \log\log n$ we obtain asymptotics $\tfrac{n\, \log n}{\log\log n}$
   as in the Poisson i.i.d. case with constant intensity \cite{LT}.
   }
\end{rem}

\subsubsection*{Rather sparse matrices}

In this zone, we go below \eqref{delta} and assume that
\begin{equation} \label{neg_polynomial}
    mp = c \, n^{-a} \, (1+o(1)), \qquad  a\in (0,1).
\end{equation}
Consider first a regular case.

\begin{thm} \label{t:Rsparse}
Assume that \eqref{neg_polynomial} holds and
\begin{equation} \label{regular_a}
   a \not \in \left\{ \frac 1k,\ k\in \N \right\}.
\end{equation}
Then, there exists a unique positive integer $k$ such that
\begin{equation} \label{bounds_a}
  \frac{1}{k+1} < a < \frac{1}{k}
\end{equation}
and
\begin{equation} \label{Emax_a}
      \E \max_\sigma \sum_{i=1}^n X_{i\sigma(i)}  = k\, n\, (1+o(1)).
\end{equation}
\end{thm}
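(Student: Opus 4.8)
The plan is to prove the matching bounds $kn(1+o(1))$ from above and below, the integer $k$ being singled out by the following heuristic: by the Poisson-like behaviour of the binomial $\BB(m,n^{-2})$ when $mp\to 0$ (note also $m=cn^{2-a}(1+o(1))\to\infty$ since $a<1$), the expected number of matrix entries that are at least $\ell$ is of order $n^2(mp)^\ell/\ell!\asymp n^{2-a\ell}$; by \eqref{bounds_a} this is polynomially larger than $n$ for every $\ell\le k$ (since then $a\ell<1$) and is $\oh(n)$ as soon as $\ell\ge k+1$ (since $a(k+1)>1$). So an optimal assignment should be able to pick up a value $\ge k$ at almost every coordinate, but essentially nothing beyond that.

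\emph{Upper bound.} For every permutation $\sigma$ and every realisation of $X$ one has the pointwise inequality
\[
   \sum_{i=1}^n X_{i\sigma(i)}\ \le\ kn+\sum_{i,j=1}^n (X_{ij}-k)_+ ,
\]
whence $\E\max_\sigma\sum_i X_{i\sigma(i)}\le kn+n^2\,\E\,(X_{11}-k)_+$. It then suffices to note that, by standard binomial tail estimates together with $mp\to0$, one has $\E\,(X_{11}-k)_+=\sum_{\ell\ge k+1}\P(X_{11}\ge\ell)=\Oh\big((mp)^{k+1}\big)$; since $a(k+1)>1$ this gives $n^2\,\E\,(X_{11}-k)_+=\Oh\big(n^{2-a(k+1)}\big)=\oh(n)$, so that $\E\max_\sigma\sum_i X_{i\sigma(i)}\le kn(1+o(1))$.

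\emph{Lower bound.} Let $H_k$ be the bipartite graph on two copies of $[1..n]$ in which row $i$ is joined to column $j$ exactly when $X_{ij}\ge k$. On the event that $H_k$ admits a perfect matching one has $\max_\sigma\sum_i X_{i\sigma(i)}\ge kn$, and, since the maximum is always nonnegative,
\[
   \E\max_\sigma\sum_{i=1}^n X_{i\sigma(i)}\ \ge\ kn\,\P\big(H_k\text{ has a perfect matching}\big),
\]
so it is enough to show that $H_k$ has a perfect matching with probability tending to $1$. The edge probability is $r:=\P(X_{11}\ge k)=\tfrac{c^k}{k!}\,n^{-ak}(1+o(1))$, and since $ak<1$ we have $rn\asymp n^{1-ak}$, which grows faster than any power of $\log n$; this is well above the classical threshold for perfect matchings in a random bipartite graph. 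To make it rigorous I would run the standard first–moment argument through K\"onig's theorem: if $H_k$ has no perfect matching then there exist $Q\subseteq[1..n]$ rows and $P\subseteq[1..n]$ columns with $|Q|+|P|=n+1$ and no edge of $H_k$ between $Q$ and $P$; writing $q=|Q|$ and using the symmetry $q\leftrightarrow n+1-q$,
\[
   \P\big(H_k\text{ has no perfect matching}\big)\ \le\ 2\sum_{q=1}^{\lceil (n+1)/2\rceil}\binom nq\binom n{q-1}\,\P\big(X_{ij}\le k-1\ \text{for all }(i,j)\in Q\times P\big).
\]
The only place where the joint law intervenes is the last factor: here I would invoke the negative association of multinomial counts — each $(i,j)\mapsto \ed{X_{ij}\le k-1}$ being a nonincreasing function of a single entry — to bound it by $(1-r)^{q(n+1-q)}$, exactly as under independence (a Poissonisation using that the probability of a Poisson$(m)$ variable equalling $m$ is of order $m^{-1/2}$ would serve equally well). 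Using $\binom nq\binom n{q-1}\le (en)^{2q}$ and $(1-r)^{q(n+1-q)}\le e^{-rq(n+1-q)}\le e^{-rqn/2}$ for $q\le(n+1)/2$, the $q=1$ term is $\Oh\big(n\,e^{-rn/2}\big)=\oh(1)$ because $rn/2\gg\log n$, while each term with $q\ge2$ is at most $e^{q(2\log(en)-rn/2)}\le n^{-4q}$, so the whole sum tends to $0$. Combined with the upper bound this yields $\E\max_\sigma\sum_i X_{i\sigma(i)}=kn(1+o(1))$, which is \eqref{Emax_a}.

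The delicate point is the perfect-matching step. One must extract a \emph{long-and-thin} forbidden block via K\"onig's theorem — a direct appeal to Hall's condition without controlling the aspect ratio of the block fails when $ak$ is close to $1$, since the binomial coefficients are then no longer absorbed by the exponential gain — and one must handle the dependence between the entries, which is exactly where negative association (or Poissonisation) of the multinomial enters. The asymptotics of $r=\P(X_{11}\ge k)$ and of $\E\,(X_{11}-k)_+$ are routine once one observes that $mp\to0$ while $m\to\infty$, so that $\BB(m,n^{-2})$ is Poisson-like at the relevant scales.
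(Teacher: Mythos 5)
Your proposal is correct, but the lower bound follows a genuinely different route from the paper. For the upper bound you truncate globally, writing $\sum_i X_{i\sigma(i)}\le kn+\sum_{i,j}(X_{ij}-k)_+$ and using $n^2\E(X_{11}-k)_+=\Oh(n^{2-a(k+1)})=\oh(n)$; the paper instead bounds each row maximum by $k+n\,\E[X_1\ed{X_1>k}]=k+o(1)$ and sums over rows --- the same estimate $\P(X_1=\ell)\le (mp)^\ell/\ell!$ drives both, so this part is essentially equivalent. For the lower bound the paper proves $\P(\max_{j\le \delta n}X_j<k)\to 0$ for independent copies, transfers it to the multinomial entries via the Christofides--Vaggelatou disintegration theorem for negatively associated variables, and then accumulates $k(1+o(1))$ per row through the greedy method, whose validity rests on Lemma~\ref{l:random_sets}; you instead show that the bipartite graph of entries with $X_{ij}\ge k$ has a perfect matching with probability tending to one, via K\"onig's theorem, a first-moment union bound over blocks with $|Q|+|P|=n+1$, and the negative orthant dependence consequence of negative association to bound $\P(X_{ij}\le k-1 \text{ on } Q\times P)\le (1-r)^{q(n+1-q)}$ with $r\sim c^k n^{-ak}/k!$ and $rn\gg\log n$. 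Your argument is self-contained (it bypasses both Lemma~\ref{l:random_sets} and the comparison with independent copies, needing only the orthant bound from NA) and gives the stronger statement that $\max_\sigma\sum_i X_{i\sigma(i)}\ge kn$ with probability tending to one, which immediately yields the expectation bound since the entries are nonnegative; it is also in the spirit of the paper's own use of Erd\H{o}s--R\'enyi matchings in Theorem~\ref{t:ER}. The paper's route is shorter given the machinery it has already built for Theorems~\ref{t:critical} and~\ref{t:Poisson}, since the greedy method and the NA comparison are reused verbatim. Your attention to the aspect ratio of the forbidden block (taking $|Q|+|P|=n+1$ so that $q(n+1-q)\ge qn/2$) is exactly the right care needed when $ak$ is close to $1$, and the asymptotics of $r$ and of $\E(X_{11}-k)_+$ are justified since $mp\to 0$ while $m\to\infty$.
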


Let us now briefly discuss the irregular case $a= \tfrac{1}{k}$ for some integer
$k\ge 2$.  Since the lower bound $a>\tfrac 1{k+1}$ is still true,
one may obtain again
\[
    \E \max_\sigma \sum_{i=1}^n X_{i\sigma(i)}  \le k \,  n\, (1+o(1)).
\]
However, the opposite bound breaks down and we are only able to prove that
\[
   \E  \max_\sigma \sum_{i=1}^n X_{i\sigma(i)}  \ge (k-1)\, n (1+o(1)).
\]
%%If we want to get something better, in the greedy method we have to apply the lower bound with the varying number of variables $v=n,n-1,n-2,...$
%% which  brings a correction constant due to the presence of  $v$ in the right hand side of the bound \eqref{bound_v}.
To summarise, for the assignment process, we have in this case that
\[
  (k-1)\, n\, (1+o(1)) \le  \E \max_\sigma \sum_{i=1}^n X_{i\sigma(i)}  \le k \,n\, (1+o(1))
\]
and conjecture that
\[
   \E \max_\sigma \sum_{i=1}^n X_{i\sigma(i)} = (k-\kappa)\, n (1+o(1)),
\]
for some $\kappa\in [0,1]$ depending on $a$ and $c$.  Proving this and finding $\kappa$ is beyond the reach of current techniques.

\subsubsection*{Very sparse matrices}
This zone is determined by
\begin{equation} \label{Vsparse}
   1 \ll m \ll n.
\end{equation}

Notice that $m\approx n$ is equivalent to $mp\approx n^{-1}$, thus the current zone is just below the previous one.

\begin{thm} \label{t:Vsparse} Under assumption \eqref{Vsparse} it is true that
\[
    \E \max_\sigma \sum_{i=1}^n X_{i\sigma(i)} = m \, (1+o(1)).
\]
\end{thm}

%%%%%%%%%%%%%%%%%%%%%%%%%%%%%%%%%%%%%%%%%%%%%%%%%%%%%%%%%%%%%%%%%%%%%%%%%%%%%%%%%
%%%%%%%%%%%%%%%%%%%%%%%%%%%%%%%%%%%%%%%%%%%%%%%%%%%%%%%%%%%%%%%%%%%%%%%%%%%%%%%%%

\section{Proofs}

 \begin{proof}[Proof of Theorem \ref{t:subgaussian}]
Let $X$ be a  $\BB(m,p)$-distributed random variable. Then,
\begin{equation}\label{Eexp}
   \E \exp(\gamma X) = (1+p(e^\gamma-1))^m, \qquad \gamma\in \R.
\end{equation}
Let now  $X_j, 1\le j\le n$, be $\BB(m,p)$-distributed random variables.
We do not assume any independence. Then, for every $\gamma>0$, we have
\[
    \E \exp(\gamma \max_{1\le j\le n} X_j)
    \le \E \sum_{j=1}^n \exp(\gamma X_j) = n \, (1+p(e^\gamma-1))^m.
\]
By Jensen inequality,
\[
  \exp \left( \gamma\, \E \max_{1\le j\le n} X_j \right)
  \le \E \exp(\gamma \max_{1\le j\le n} X_j)
  \le n \, (1+p(e^\gamma-1))^m.
\]
It follows that
\begin{eqnarray*}
    \E \max_{1\le j\le n} X_j
    &\le& \gamma^{-1} \left( \log n + m \log(1+p(e^\gamma-1)) \right)
\\
    &\le& \gamma^{-1} \left( \log n + mp (e^\gamma-1) \right).
\end{eqnarray*}

We choose  $\gamma:=(\tfrac{2\log n}{mp})^{1/2}$.
By \eqref{subgaussian} we have $\gamma\to 0$. Using the expansion
$e^\gamma-1= \gamma +\gamma^2(1+o(1))/2$, we obtain
\begin{eqnarray*}
  \E \max_{1\le j\le n} X_j
  &\le& \gamma^{-1} \left( \log n + mp [\gamma +\gamma^2(1+o(1))/2] \right)
\\
  &=& mp + \gamma^{-1} \log n + mp\,\gamma(1+o(1))/2
\\
   &=& mp + (2 mp \log n)^{1/2} (1+o(1)).
\end{eqnarray*}
Furthermore, by \eqref{subgaussian} the second term is negligible and we obtain
\[
    \E \max_{1\le j\le n} X_j \le m \, p \, (1+o(1)).
\]

The same approach applies to the minima. With the same notation we have
for every $\gamma>0$
\[
    \E \exp(- \gamma \min_{1\le j\le n} X_j)
    \le \E \sum_{j=1}^n \exp(-\gamma X_j) = n \, (1+p(e^{-\gamma}-1))^m.
\]
By Jensen inequality,
\[
  \exp \left( - \gamma\, \E \min_{1\le j\le n} X_j \right)
  \le \E \exp(- \gamma \min_{1\le j\le n} X_j)
  \le n \, (1+p(e^{-\gamma}-1))^m.
\]
It follows that
\begin{eqnarray*}
    \E \min_{1\le j\le n} X_j
    &\ge&  - \gamma^{-1} \left( \log n + m \log(1+p(e^{-\gamma}-1)) \right).
%%\\
%%    &\le&  -\gamma^{-1} \left( \log n + mp (e^{-\gamma}-1) \right).
\end{eqnarray*}
We still use $\gamma:=(\tfrac{2\log n}{mp})^{1/2}\to 0$.
The expansion $e^{-\gamma}-1= -\gamma +\gamma^2(1+o(1))/2$ yields
\[
  \log(1+p(e^{-\gamma}-1))  =  p (e^{-\gamma}-1) (1+o(1))
  =  - p \gamma (1+o(1))  + p \gamma^2 (1+o(1)) / 2.
\]
From this we get
\begin{eqnarray*}
  \E \min_{1\le j\le n} X_j
  &\ge& -\gamma^{-1} \left( \log n + mp [-\gamma (1+o(1))  +\gamma^2(1+o(1))/2] \right)
\\
  &=& mp  (1+o(1)) - \gamma^{-1} \log n - mp\,\gamma(1+o(1))/2
\\
   &=& mp (1+o(1))  - (2 mp \log n)^{1/2} (1+o(1)).
\end{eqnarray*}
By \eqref{subgaussian} the second term is negligible and we obtain
\[
    \E \min_{1\le j\le n} X_j \ge mp \, (1+o(1)).
\]

Let us now apply these results to the multinomial assignment process.
Here the joint law of the entries $X_{ij}$ is $\MM(m, n^{2})$ and every
$X_{ij}$ follows Binomial law $\BB(m,p)$ with $p=n^{-2}$.
Our bound for the maxima yields
\[
   \E \max_\sigma \sum_{i=1}^n X_{i\sigma(i)}
   \le \sum_{i=1}^n \E \max_{1\le j\le n} X_{ij}
   = n \cdot  \E \max_{1\le j\le n} X_{1j}
   \le \frac{m}{n} \, (1+o(1)),
\]
while the bound for the minima yields
\[
   \E \min_\sigma \sum_{i=1}^n X_{i\sigma(i)}
   \ge \sum_{i=1}^n \E \min_{1\le j\le n} X_{ij}
   = n \cdot  \E \min_{1\le j\le n} X_{1j}
   \ge \frac{m}{n} \, (1+o(1)).
\]

It follows that
\begin{eqnarray*}
   \E \max_\sigma \sum_{i=1}^n X_{i\sigma(i)} &=& \frac{m}{n} \, (1+o(1)),
\\
    \E \min_\sigma \sum_{i=1}^n X_{i\sigma(i)} &=& \frac{m}{n} \, (1+o(1)),
\end{eqnarray*}
as required.
\end{proof}
\medskip

%%%%%%%%%%%%%%%%%%%%%%%%%%%%%%%%%%%%%%%%%%%%%%%

 \begin{proof}[Proof  of Theorem \ref{t:critical}]
Let $(X_j)$ be negatively associated random variables following the Bernoulli law $\BB(m,p)$.
We claim that for every $c>0$  under \eqref{critical}  and under the additional assumption
\begin{equation}  \label{plogn}
  p \, \log n \to 0,
\end{equation}
it is true that
\begin{equation}  \label{Emax_critical}
  \E \max_{1\le j\le n} X_j = c H_* \log n \, (1+o(1)),
  \qquad \textrm{as } n\to\infty.
\end{equation}
Further, for every $c>1$,
\begin{equation} \label{Emin_critical}
  \E \min_{1\le j\le n} X_j = c \widetilde{H}_* \log n \, (1+o(1)),
  \qquad \textrm{as } n\to\infty.
\end{equation}
\medskip

{\bf The upper bound in \eqref{Emax_critical} and the lower bound in \eqref{Emin_critical}}.
Let $H>H_*$. Then
\begin{equation}  \label{Hc}
   H\log H-(H-1)>\frac 1c.
\end{equation}
Let $r:=Hp$. Then, by \eqref{critical}, $m r=H m p = c\, H\, \log n \, (1+o(1))$.

 Applying the exponential Chebyshev inequality for every $j$ and every $v>0$,  we obtain
\begin{eqnarray} \nonumber
  \P(X_j\ge c H \log n+ v) &=& \P(X_j\ge mr +v)
\\  \label{Cheb}
  &\le& \frac{\E e^{\gamma X_j}}{e^{\gamma (m r+v)}}
  = \left[ \frac{1+p(e^\gamma-1)}{e^{\gamma r}} \right]^m \ e^{-\gamma v}.
\end{eqnarray}
By choosing the optimal $\gamma:= \log\left(\frac{(1-p)r}{p(1-r)}\right)$, we have
\begin{eqnarray*}
 \frac{1+p(e^\gamma-1)}{e^{\gamma r}} &=& \left( \frac pr\right)^r
 \exp\left((1-r)\log(1-p)- (1-r)\log(1-r) \right)
\\
     &=& H^{-Hp}  \exp\left(-p+r + O(p^2) \right)
\\
     &=&  \exp\left( - (H\log H -(H-1)) p + O(p^2) \right).
\end{eqnarray*}
Hence,
\begin{eqnarray*}
   \left[
  \frac{1+p(e^\gamma-1)}{e^{\gamma r}}\right]^m
  &=& \exp\left( - (H\log H -(H-1) + o(1))\, mp  \right)
\\
  &=&  \exp\left( - (H\log H -(H-1))\, c\, \log n (1+o(1)) \right)
\\
  &:=& n^{-\beta+o(1)},
\end{eqnarray*}
where by \eqref{Hc}  it is true that
\[
   \beta= (H\log H -(H-1)) c > 1.
\]

Substituting the above results in \eqref{Cheb} we obtain
\[
   \P(X_j\ge c H \log n +v) \le  n^{-\beta+o(1)} \ e^{-\gamma v}.
\]
It is now trivial that
\[
   \P(  \max_{1\le j\le n} X_j \ge c H \log n +v) \le  n^{-(\beta-1)+o(1)} \ e^{-\gamma v}.
\]
It follows that
\begin{eqnarray*}
  \E \max_{1\le j\le n} X_j - cH\log n
  &=&  \E \left( \max_{1\le j\le n} X_j - cH\log n\right)
\\
  &\le&   \E \left( \max_{1\le j\le n} X_j - cH\log n\right)_+
\\
  &=& \int_0^\infty  \P(\max_{1\le j\le n} X_j \ge c H \log n +v)\, dv
\\
   &\le&   n^{-(\beta-1)+o(1)} \int_0^\infty e^{-\gamma v} \, dv
   =    n^{-(\beta-1)+o(1)}  \frac 1\gamma
\\
   &=& n^{-(\beta-1)+o(1)}  \frac {1}{\log H} (1+o(1)) \to 0.
\end{eqnarray*}
Therefore,
\[
   \E \max_{1\le j\le n} X_j \le c H\log n + o(1).
\]
By letting $H\searrow H_*$ we obtain the upper bound in \eqref{Emax_critical}.

The lower bound in \eqref{Emin_critical} is obtained in exactly the same way through
the Chebyshev inequality for the lower tails.
\medskip

{\bf Converse bounds.}
The lower bound in \eqref{Emax_critical} is reached  in a few steps.
We give a Poissonian approximation of Binomial laws,
then provide a lower bound for this Poissonian approximation.
This bound provides a lower bound for the maximum's expectation
of {\it independent} Binomial i.i.d. random variables. Finally, using negative association argument,
 we reduce the claim to the independence case.
\medskip

Step 1. Let $X$ be a Binomial $\BB(m,p)$-distributed random variable.
Elementary calculations show that Poissonian approximation
\[
    \P(X=k) = e^{-mp} \frac{(mp)^k}{k!} \ (1+o(1)
\]
is valid if $p^2m\to 0$, $pk\to 0$, and $\tfrac{k^2}{m}\to 0$.
\medskip

Step 2. Let $c>0$ and $H>1$.
Let $k=[cH \log n +1]$ and $\lambda = c \log n (1+o(1))$.
Then an elementary evaluation of Poissonian probabilities yields
\[
  e^{-\lambda} \frac{\lambda^k}{k!}
  = n^{-\beta+o(1)}
\]
where
\begin{equation}\label{beta}
   \beta:= c (H\log H - (H-1)).
\end{equation}
Now we combine the results of the two steps. Note that with
 \eqref{critical}, \eqref{plogn} and for $k = c\,H \log n \, (1+o(1))$,
all three assumptions of Step 1 are verified and, with $\lambda=mp$, we obtain
\[
  \P(X\ge cH \log n) \ge \P(X=k) =  n^{-\beta+o(1)}.
\]
If $1<H<H_*$, then $\beta<1$.
\medskip

Step 3. Let $(\widetilde{X}_j)_{1\le j\le n}$ be independent copies of $X$. Then
\begin{eqnarray} \nonumber
    \P(\max_{1\le j\le n} \widetilde X_j \le c\, H \log n)
    &=& \P(X\le c\, H \log n)^n
    \le (1- n^{-\beta+o(1)})^n
\\ \label{step3_a}
   &\le& \exp(-n^{1-\beta+o(1)}) \to 0.
\end{eqnarray}
It follows that
\begin{equation} \label{step3_b}
   \E \max_{1\le j\le n} \widetilde X_j
   \ge  cH \log n (1+o(1)).
\end{equation}
\medskip

Step 4.  From the desintegration theorem for negatively associated variables,
due to Christofides and Vaggelatou \cite{CV}, see also Bulinski and Shashkin \cite[Chapter~2,Theorem~2.6 and Lemma~2.2]{BulSh},
one has
\begin{equation} \label{step4}
    \E \max_{1\le j\le n} X_j  \ge
     \E \max_{1\le j\le n} \widetilde X_j.
\end{equation}
Combining this estimate with the result of Step 3, for every $H<H_*$ we obtain
\[
    \E \max_{1\le j\le n} X_j  \ge   cH \log n (1+o(1)).
\]
Letting $H\nearrow H_*$, we obtain the lower bound in \eqref{Emax_critical}.
\bigskip

The upper bound in \eqref{Emin_critical}
follows in a similar way.
Let now $k:=[cH \log n]$. By using Poissonian approximation and Poissonian asymptotics
we obtain
\[
   \P(X\le cH \log n) \ge \P(X=k) =
   n^{-\beta+o(1)}
\]
with the same $\beta$ from \eqref{beta}.
If $\widetilde{H}_*<H<1$, then $\beta<1$.

As before, for independent variables we obtain

\[
    \P\left(\min_{1\le j\le n} \widetilde X_j \ge cH \log n\right)
    \le \exp\big(-n^{1-\beta+o(1)}\big).
\]
It follows that
\begin{eqnarray*}
   \E \min_{1\le j\le n} \widetilde X_j
   &=&   \E \left[ \min_{1\le j\le n} \widetilde X_j \ed{ \min_{1\le j\le n} \widetilde X_j \le  cH \log n }\right]
\\
   &&       +  \E \left[ \min_{1\le j\le n} \widetilde X_j \ed{ \min_{1\le j\le n} \widetilde X_j >  cH \log n }\right]
\\
          &\le&    c\, H \log n
          +  \sum_{j=1}^n \E \left[ X_j \ed{ \min_{1\le i\le n, i\not=j} \widetilde X_i >  cH \log n }\right]
\\
          &=&   c\, H \log n
          +  n \,\E \widetilde X_1 \ \P\left( \min_{2\le i\le n} \widetilde X_i >  c\, H \log n \right)
\\
           &\le&  c\, H \log n
           +  n  \cdot c\, \log n \, (1+o(1)) \ \exp(-n^{1-\beta+o(1)})
\\
          &=&  c\, H \log n + o(1).
\end{eqnarray*}

The final negative association argument reads as follows. Since
$(X_j)$ are negatively associated, so are $(-X_j)$, too.
From  the desintegration theorem cited above it follows that
\[
   \E \max_{1\le j\le n} (-X_j) \ge \E \max_{1\le j\le n} (-\widetilde X_j)
\]
which is equivalent to
\[
   \E \min_{1\le j\le n} X_j \le \E \min_{1\le j\le n} \widetilde X_j.
\]
By combining the obtained results, we have
\[
   \E \min_{1\le j\le n} X_j \le c\, H \log n (1+o(1)).
\]
Finally, letting $H\searrow \widetilde H_*$ we obtain the upper bound
in \eqref{Emin_critical}.
\medskip

 {\bf The estimates for assignment process.}
Recall that a multinomial distribution is {\it negatively associated}, see Joag-Dev and Proschan \cite{JDP}
and Bulinski and Shashkin \cite[Chapter~1,Theorem~1.27]{BulSh}.
Furthermore, with $p=n^{-2}$, the assumption \eqref{plogn} is also valid.

Therefore, the bounds \eqref{Emax_critical} and \eqref{Emin_critical} apply
to the sums of the entries $X_{ij}$. They yield, respectively,
\begin{eqnarray*}
   \E \max_\sigma \sum_{i=1}^n X_{i\sigma(i)}
   &\le& \sum_{i=1}^{n} \E \max_{1\le j\le n} X_{ij}
   \le c\, H_* n \log n \, (1+o(1)),
\\
    \E \min_\sigma \sum_{i=1}^n X_{i\sigma(i)}
    &\ge& \sum_{i=1}^{n} \E \min_{1\le j\le n} X_{ij}
    \ge c \, \widetilde H_* n \log n \, (1+o(1)).
\end{eqnarray*}

The opposite bounds follow
by the ``greedy method'' introduced in \cite{MS} (and used in \cite{LT}) that we recall now.
This method allows to construct a quasi-optimal permutation $\sigma^*$ that provides
sufficiently large value or sufficiently small value of the assignment process.
Recall that $[1..i] := \{1, 2, \dots, i\}$.
Define
\[
    \sigma^*(1) := \arg \max\limits_{j\in[1..n]} X_{1j},
\]
and let for all  $i=2,\dots, n$
\[
    \sigma^*(i) := \arg \max\limits_{j \not\in \sigma^*([1..i-1])} X_{ij}.
\]
It is natural to call this strategy greedy, because at every step we consider the row $i$,
take the maximum of its available elements (without considering the influence of this choice on subsequent
steps) and then forget the row $i$ and the corresponding column $\sigma^*(i)$.
The number of variables used at consequent steps is decreasing from $n$ to $1$.

By using the greedy method, we have
\begin{eqnarray} \nonumber
   \E \max_\sigma \sum_{i=1}^{n} X_{i\sigma(i)}
    &\ge&  \E  \sum_{i=1}^{n} X_{i\,\sigma^*(i)}
     =  \sum_{i=1}^{n} \E \max_{ j \not\in \sigma^*([1..i-1])} X_{ij}
\\    \label{greedy}
  &=&    \sum_{i=1}^{n}   \E \max_{1\le j \le n-i+1} X_{ij}.
\end{eqnarray}
The latter equality may seem surprising because the index sets
$[n]\backslash \sigma^*([1..i-1])$ are random and depend on the matrix $X$.
However, it is justified by the following lemma.

\begin{lem} \label{l:random_sets}
   Let $N_1,N_2>0$ be positive integers and  let a random vector $X:=(X_j)_{1\le j \le N_1+N_2}$
   be distributed according to a multinomial law $\MM_{m,N_1+N_2}$. Let
   $X^{(1)}:=(X_j)_{1\le j \le N_1}$ and  $X^{(2)}:=(X_j)_{N_1< j \le N_2}$.
   Let $1\le q\le N_2$ and let $\JJ\subset (N_1,N_1+N_2]$ be a random set of size $q$
   determined by $X^{(1)}$. Then the variables $\max_{j\in \JJ} X_j$ and
   $\max_{N_1<j\le N_1+q} X_j$ are equidistributed.
\end{lem}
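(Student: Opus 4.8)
The plan is to use two elementary properties of the multinomial law: full exchangeability of its coordinates, and the fact that conditioning on an initial block of coordinates leaves the complementary block multinomially distributed with a reduced number of trials.

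\textbf{Step 1 (conditional law of the second block).} I would first record that if $(X_j)_{1\le j\le N_1+N_2}\sim\MM(m,N_1+N_2)$ and we write $|x^{(1)}|:=\sum_{j=1}^{N_1}x_j$, then the conditional law of the second block $(X_j)_{N_1<j\le N_1+N_2}$ given $X^{(1)}=x^{(1)}$ is $\MM(m-|x^{(1)}|,N_2)$. The quickest justification is the balls-into-boxes construction: throw $m$ balls independently and uniformly into $N_1+N_2$ boxes; conditionally on the occupancy numbers of the first $N_1$ boxes, the remaining $m-|x^{(1)}|$ balls are independent and uniform over the last $N_2$ boxes, which is exactly $\MM(m-|x^{(1)}|,N_2)$. (Alternatively this is read off the explicit multinomial pmf.) Two features of this description matter: the conditional law is \emph{symmetric} in its $N_2$ arguments, and it depends on $x^{(1)}$ \emph{only through} $|x^{(1)}|$.

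\textbf{Step 2 (exchangeability argument).} Since $\JJ$ is a function of $X^{(1)}$, conditionally on $X^{(1)}$ it equals a deterministic $q$-element subset $J=J(X^{(1)})\subset(N_1,N_1+N_2]$. Fix a bounded measurable $f:\R\to\R$. Because the conditional law of the second block given $X^{(1)}$ is the symmetric law $\MM(m-|X^{(1)}|,N_2)$, and for an exchangeable vector the law of the maximum over a $q$-subset of its coordinates does not depend on which $q$-subset is chosen, we get
\[
   \E[f(\max_{j\in J}X_j)\mid X^{(1)}] = \E[f(\max_{N_1<j\le N_1+q}X_j)\mid X^{(1)}] \qquad \text{a.s.},
\]
since both sides are the \emph{same} functional ("$f$ of the maximum over a fixed $q$-subset") evaluated on a $\MM(m-|X^{(1)}|,N_2)$ vector. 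Taking expectations and using the tower property yields $\E f(\max_{j\in\JJ}X_j)=\E f(\max_{N_1<j\le N_1+q}X_j)$, and since $f$ was arbitrary the two maxima are equidistributed.

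\textbf{Main obstacle.} The only point requiring genuine care is Step 1: one must state the conditional distribution of the second block as a specific law that is a function of $X^{(1)}$ (through $|X^{(1)}|$ only), rather than merely asserting "conditional exchangeability". This is precisely what lets the \emph{same} functional form be applied to $\JJ$ and to the reference set $\{N_1+1,\dots,N_1+q\}$ inside the conditional expectation; the balls-into-boxes picture makes it transparent, and the remainder is a one-line tower-property computation.
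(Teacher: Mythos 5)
Your proof is correct and takes essentially the same approach as the paper: condition on $X^{(1)}$, note that the conditional law of the second block is $\MM_{m-S,N_2}$ and is exchangeable, so the maximum over the (now fixed) set $J(X^{(1)})$ has the same conditional law as the maximum over $\{N_1+1,\dots,N_1+q\}$, and integrate out. The paper writes this conditioning as explicit sums over the set $J$ and the value $s$ of $S$ instead of your tower-property formulation, but the substance is identical.
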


By applying the asymptotic expression \eqref{Emax_critical} to each term of the sum \eqref{greedy} and using
that the function $n\mapsto \log n$ is slowly varying we obtain the desired lower bound
\[
   \E \max_\sigma \sum_{i=1}^{n} X_{i\sigma(i)} \ge c\, H_*\, n\, \log n \, (1+o(1)) .
\]
Replacing maxima by minima in the greedy method and using \eqref{Emin_critical}
yields the remaining upper bound
\[
   \E \min_\sigma \sum_{i=1}^{n} X_{i\sigma(i)} \le   c \, \widetilde{H}_* \, n \, \log n \, (1+o(1)).
\]

This completes the proof of Theorem~\ref{t:critical} except for the postponed proof
of Lemma~\ref{l:random_sets}.
\end{proof} %% of Theorem {t:critical}
\medskip

%%%%%%%%%%%%%%%%%%%%%%%%%%%%%%%%%%%%%%%%%%%%%%%%%%%%%%%%

 \begin{proof}[Proof  of Lemma \ref{l:random_sets}]
Let
 \[
   S=S(X^{(1)}) := \sum_{j=1}^{N_1} X_j.
 \]
 Recall that the conditional distribution of $X^{(2)}$  w.r.t.  $X^{(1)}$ is $\MM_{m-S,N_2}$.
 This means that for all $x_1\in \N^{N_1}, x_2\in \N^{N_2}$ it is true that
 \[
    \P(X^{(2)}=x_2, X^{(1)}=x_1) =  \P(X^{(1)}=x_1) \ \MM_{m-S(x_1), N_2} (x_2).
 \]
For every fixed set $J\subset (N_1,N_1+N_2]$ of size $q$,
it holds that
\[
    \P(X^{(2)}=x_2, \JJ=J ) =  \sum_{s=0}^m \P(\JJ=J, S=s) \ \MM_{m-s, N_2} (x_2),
 \]
   by summing up over $x_1\in \JJ^{-1}(J)$.
  Now, for every non-negative integer $\mu$, by summing up over $x_2$ such that
 $\max_{j\in J} x_{2j} = \mu$, we obtain
 \[
    \P( \max_{j\in J} X_{j}=\mu   , \JJ=J ) =  \sum_{s=0}^m \P(\JJ=J, S=s) \
    \MM_{m-s, N_2} (x_2 : \max_{j\in J} x_{2j} = \mu ).
 \]
 The latter factor does not depend on a particular set $J$ due to exchangeability property of the multinomial law.
We thus may denote
 \[
    \MM_{m-s, N_2} (x_2 : \max_{j\in J} x_{2j} = \mu ) =: F(m-s,N_2,q,\mu)
 \]
 and obtain
 \[
    \P( \max_{j\in J} X_{j}=\mu   , \JJ=J ) =  \sum_{s=0}^m \P(\JJ=J, S=s) \ F(m-s,N_2,q,\mu).
 \]
 By summing up over all sets $J$ of size $q$
 we see that
 \[
    \P( \max_{j\in \JJ} X_{j}=\mu) =  \sum_{s=0}^m \P(S=s) \ F(m-s,N_2,q,\mu)
 \]
does not depend on the specific choice of $\JJ$, and the claim of lemma follows.
\end{proof} %% of Lemma
\medskip

%%%%%%%%%%%%%%%%%%%%%%%%%%%%%%%%%%%%%%%%%%%%%%%%

 \begin{proof}[Proof  of Theorem \ref{t:ER}]
We are going to use an old result by Erd{\H o}s and R\'enyi \cite{ER} about the existence of perfect matching
in a random bipartite graph. Let $G$ be a uniformly distributed
$n+n$ bipartite graph with $m=m(n)$ edges. If
\begin{equation} \label{ER}
    \lim \left( \frac{m}{n} -\log n\right) = \infty,
\end{equation}
then with probability tending to one, as $n\to \infty$, $G$ has a perfect matching.

In the matrix form, this result asserts the following. Let $Y=Y(n,m)=\{Y_{ij}\}_{1\le i,j\le n}$
be a uniformly distributed random $n \times n$ matrix with entries taking values in $\{0,1\}$ and satisfying
$\sum_{i,j=1}^n Y_{ij} =m$. If \eqref{ER} holds, then
\begin{equation} \label{ER_concl}
   \lim \P\left( \max_\sigma \sum_{i=1}^n Y_{i\sigma(i)} =n \right) =1.
\end{equation}

Let now $X=(X_{ij})$ be our matrix following the multinomial law  $\MM(m, n^{2})$.
Introduce the matrix $\tY$ by
\[
   \tY_{ij} := \begin{cases} 0, & X_{ij}>0,\\
   1,& X_{ij}=0.
   \end{cases}
\]
Note that
\[
   \P(\tY_{ij}=1) = \P ( X_{ij} =0)  = \left(1-p \right)^m = \exp\left( - m\, p\, (1+o(1)) \right).
\]
Let $S:= \sum_{i,j=1}^n \tY_{ij}$  be the number of empty cells in our matrix $X$. Observe that, conditioned on $S$,  the matrix $\tY$ has the same distribution as $Y(n,S)$.
Taking into account that the probability in \eqref{ER_concl} is non-decreasing as a function
of $m$, we have for every positive integer $M$
\begin{eqnarray} \nonumber
 && \P\left( \min_\sigma \sum_{i=1}^n X_{i\sigma(i)} =0 \right)
  = \P\left( \max_\sigma \sum_{i=1}^n \tY_{i\sigma(i)} =n \right)
\\  \label{product}
  &\ge& \P(S\ge M) \ \P \left(   \max_\sigma \sum_{i=1}^n Y(n,M)_{i\sigma(i)} =n \right).
\end{eqnarray}
We choose $M=n^{\beta}$ with $\beta\in(1,2-c)$ and show that both probabilities in the latter product tend to one as $n\to\infty$.

For the first one, using \eqref{alpha}, we have
\[
  \E S = n^2 \E\tY_{11} = n^2 \exp\left( - m\, p\, (1+o(1)) \right)
  \ge n^{2-c(1+o(1))} .
\]
Furthermore, since the variables $\tY_{ij}$ are negatively correlated, we have
\[
  \Var S  \le n^2\,  \Var\tY_{11}  \le n^2\,  \E\tY_{11} = \E S .
\]
Finally, using $\beta < 2-c$, by Chebyshev inequality,
\begin{eqnarray*}
  \P(S\le n^\beta) &\le& \P(|S-\E S|\ge \E S-n^\beta) = \P(|S-\E S|\ge \E S (1+o(1)))
\\
   &\le& \frac{\Var S}{(\E S)^2 (1+o(1))} \le \frac{\E S}{(\E S)^2 (1+o(1))} \to 0.
\end{eqnarray*}
 On the other hand, since $\beta>1$, the assumption \eqref{ER} with $m:=M=n^\beta$ is true. Therefore, the second probability
 in the product \eqref{product} tends to one by Erd{\H o}s--R\'enyi result. We obtain  from \eqref{product} that
 \[
   \lim \P\left( \min_\sigma \sum_{i=1}^n X_{i\sigma(i)} =0 \right) =1,
 \]
 which is the desired claim.
\end{proof}
\medskip

%%%%%%%%%%%%%%%%%%%%%%%%%%%%%%%%%%%%%%%%%%%%%%%

 \begin{proof}[Proof  of Theorem \ref{t:Poisson}]
The proof goes along the same lines as the one of Theorem~\ref{t:critical}.
Instead of the key relation \eqref{Emax_critical}, we prove the following claim.
Let $(X_j)$ be negatively associated random variables following Bernoulli law $\BB(m,p)$.
Then under assumptions \eqref{Poisson} and \eqref{delta} it is true that
\begin{equation}  \label{Emax_Poisson}
  \E \max_{1\le j\le n}  X_j
  = \frac{\log n}{\log\left(\frac{\log n}{mp}\right)}\, (1+o(1)),
  \qquad \textrm{as } n\to\infty.
\end{equation}

{\bf Upper bound.}
For the upper bound in \eqref{Emax_Poisson} that we are going to prove now, no lower bound on $mp$ is needed;
we only use \eqref{Poisson}.

Let $\beta>1$, $y:=\tfrac{\beta \log n}{mp}$, $r:=\frac{y}{\log y}$. Notice that under \eqref{Poisson}
we have $y,r\to \infty$. Next, for a Binomisal $\BB(m,p)$ random variable $X$ and
for every $v>0$ it is true that
\begin{eqnarray*}
 \P \left(X\ge \frac{\beta\log n}{\log\left(\frac{\log n}{mp}\right)} +v \right)
 &\le&
  \P \left(X\ge \frac{ \beta \log n}{\log\left(\frac{\beta \log n}{mp}\right)} +v \right)
\\
  &=&
   \P \left(X\ge \frac{\beta\,
   \frac {\log n}{mp}} {\log\left(\frac{\beta \log n}{mp}\right)} \ mp  +v\right)
\\
  &=&
  \P\left( X\ge \frac{y}{\log y} \ mp +v \right)
  =
   \P\left( X\ge r \, mp +v \right).
\end{eqnarray*}
In the next calculation we use the Poisson version of the bound for exponential moment
\[
   \E \exp(\gamma X) \le \exp(mp (e^\gamma-1))
\]
that immediately follows from the exact formula \eqref{Eexp}.
By applying Chebyshev inequality with Poisson-optimal parameter $\gamma=\log r$ we obtain
\begin{eqnarray*}
    \P\left( X\ge rmp+v \right)
    &\le&  \E \exp(\gamma X) \exp(-\gamma (rmp+v))
\\
     &\le&  \exp( -mp (\gamma r-e^\gamma+1) -\gamma v)
\\
      &=&  \exp( - m p (r\log r - r+1)- \gamma v).
\end{eqnarray*}
Since $r\to\infty$, we have
\[
   r \log r - r+1\sim r\log r \sim y = \frac{\beta \log n}{mp}.
\]
It follows that
\begin{eqnarray*}
    \P\left( X \ge rmp +v \right)
    &\le&  \exp( - \beta \log n (1+o(1)) -\gamma v)
\\
   &=& n^{- \beta (1+o(1))} \exp(-\gamma v).
\end{eqnarray*}
and
\[
    \P\left(\max_{1\le j\le n} X_j \ge rmp +v \right)
    \le n\   \P\left( X \ge rmp +v \right)
    \le n^{-(\beta-1) (1+o(1))} \exp(-\gamma v).
\]
Hence,
\begin{eqnarray*}
   \E \max_{1\le j\le n} X_j
   &\le&
   rmp +  n^{-(\beta-1)(1+o(1))} \int_0^\infty \exp(-\gamma v)dv
\\
   &=& rmp +  n^{-(\beta-1)(1+o(1))} \gamma^{-1}.
\end{eqnarray*}
Note that
\[
  rmp\gamma = r\log r \, mp \sim  y\, mp = \beta \log n \to \infty,
\]
hence we conclude that  $n^{-(1-\beta)(1+o(1))} \gamma^{-1}$ is negligible compared to $rmp$,
thus
\begin{eqnarray*}
  \E \max_{1\le j\le n} X_j \le  rmp (1+o(1))
  \sim \frac{\beta\log n}{\log\left(\frac{\log n}{mp}\right)}
\end{eqnarray*}
and the required upper bound follows by letting $\beta \searrow 1$.
\medskip

{\bf Lower bound.}
Let $\beta\in (0,1)$, $y:=\tfrac{\beta \log n}{mp}$, $r:=\tfrac{y}{\log y}$, and
\[
  k:= r mp = \frac{y}{\log y}\, mp = \frac {\beta \log n}{\log y} .
\]
Assumption \eqref{Poisson} yields $y\to\infty$, $k=o(\log n)$, $e^k=n^{o(1)}$,
$e^{mp}=n^{o(1)}$.

On the other hand, under assumption \eqref{delta}
we have $|\log(mp)|\ll \log n$, which yields $\log y \ll \log n$, hence $k\to \infty$.

Therefore, by using Poissonian approximation, we obtain
\begin{eqnarray*}
 \P(X\ge k) &\ge&  \P(X=k)
 \sim e^{-mp} \frac {(mp)^k}{k!}
 \sim  e^{-mp} \ e^k\, (2\pi k)^{-1/2}  \left(\frac{mp}{k}\right)^k
\\
  &=&  n^{o(1)}\, r^{-k}
  =  n^{o(1)} \,r^{-rmp}
  =  n^{o(1)} \, \exp(-r \log r\ mp)
\\  &=& n^{o(1)}\, \exp (-y (1+o(1))\, mp)
     = n^{-\beta+o(1)}.
\end{eqnarray*}
By repeating the arguments from  \eqref{step3_a}, \eqref{step3_b}, and
\eqref{step4} we obtain
\[
    \E \max_{1\le j\le n} X_j  \ge k (1+o(1))
    = \frac{y}{\log y}\ mp \, (1+o(1))
    = \frac{\beta \log n}{\log y}\, (1+o(1))
\]
and letting $\beta\nearrow 1$ provides the required lower bound in \eqref{Emax_Poisson}.

Once \eqref{Emax_Poisson} is proved, the proof of Theorem~\ref{t:Poisson} is completed by the same
simple arguments (including the greedy method) as that of Theorem~\ref{t:critical}.
\end{proof}
\medskip

%%%%%%%%%%%%%%%%%%%%%%%%%%%%%%%%%%%%%%%%%%%%%%%%%%%%%%%%%%%%%%%%%%%%%%

 \begin{proof}[Proof  of Theorem \ref{t:Rsparse}]

{\bf Upper bound}. We have
\begin{eqnarray*}
    &&     \E \max_{1\le j\le n} X_j
\\
     &=&   \E \left[\max_{1\le j\le n} X_j \ed{ \max_{1\le j\le n} X_j \le k } \right]
     + \E \left[\max_{1\le j\le n} X_j \ed{ \max_{1\le j\le n} X_j > k } \right]
\\
       &\le&  k + \sum_{j=1}^n \E \left[ X_j \ed{X_j > k} \right]
        =  k +  n \, \E \left[ X_1 \ed{X_1 > k} \right].
\end{eqnarray*}
Furthermore,  since the law of $X_1$ is $\BB(m,p)$, it is true that
\[
   \P(X_1=\ell) = \frac{m!}{(m-\ell)!} \, \frac{p^\ell}{\ell!} \, (1-p)^{m-\ell}
   \le \frac{m^\ell p^\ell}{\ell!}, \qquad 0\le \ell \le m.
\]
Hence,
\begin{eqnarray*}
    \E \left[ X_1 \ed{X_1 > k} \right] &\le&  \sum_{\ell=k+1}^\infty \frac{(mp)^\ell}{(\ell-1)!}
     =  \sum_{q=0}^\infty \frac{(mp)^{k+1+q}}{(k+q)!}
\\
    &\le& (mp)^{k+1} \exp(mp) = (mp)^{k+1} (1+o(1)).
\end{eqnarray*}
Therefore,
\begin{equation} \label{Emax_a_upper}
   \E \max_{1\le j\le n} X_j \le k + c^{k+1} n^{1-a(k+1)}\, (1+o(1)) = k+o(1),
\end{equation}
where we used the lower bound in \eqref{bounds_a} at the last step.
\medskip

Turning to the lower bound, for every positive integer $v$  in the {\it  independent case},
we have
\begin{eqnarray} \nonumber
    \P\left( \max_{1\le j\le v} X_j < k \right)
    &=&  \P\left( X_1 < k \right)^{v}
     = \left(1- \P\left( X_1 \ge k \right)  \right)^{v}
\\  \nonumber
   &\le&   \left(1- \P\left( X_1 = k \right)  \right)^{v}
\\   \nonumber
   &=&  \exp\{ - v \ \P\left( X_1 = k \right)\,  (1+o(1))\}
\\ \label{bound_v}
      &=& \exp\left\{ - v \  \frac{c^k n^{-a k}}{k!} \, (1+o(1))\right\}.
\end{eqnarray}
Let us fix some small $\delta\in (0,1)$. By letting $v=[\delta n]$ and  using the upper bound in \eqref{bounds_a} we obtain
\[
      \P\left( \max_{1\le j\le [\delta n] } X_j < k \right)  \to 0.
\]
It follows that
\[
    \E \max_{1\le j\le [\delta n]} X_j
    \ge k  \ \P\left( \max_{1\le j\le [\delta n]} X_j \ge k \right)
    = k \, (1+o(1)),
\]

By using negative association argument \eqref{step4}, we also
obtain
\begin{equation} \label{Emax_a_lower}
    \E \max_{1\le j\le [\delta n]} X_j  \ge  k \, (1+o(1))
\end{equation}
in the multinomial setting.

Finally, by using \eqref{Emax_a_upper} and the greedy method based on \eqref{Emax_a_lower}, we conclude that
in the regular case \eqref{regular_a} for the assignment process it is true that
\[
    \E \max_\sigma \sum_{i=1}^n X_{i\sigma(i)} =  k\, n \, (1+o(1)). \qedhere
\]
\end{proof}
\medskip

%%%%%%%%%%%%%%%%%%%%%%%%%%%%%%%%%%%%%%%%%%%%%%%

 \begin{proof}[Proof  of Theorem \ref{Vsparse}]
The upper bound
\[
   \max_\sigma \sum_{i=1}^n X_{i\sigma(i)} \le m
\]
is trivial; it remains to prove the lower bound.

Let us denote $(u_i,v_i)_{1\le i\le m}$ the coordinates of the particles thrown on the square table.
All $u_i$ and all $v_i$ are i.i.d. random variables uniformly distributed on integers $[1..n]$.
Let  $U_0=V_0=\emptyset$,
\[
   U_k := \left\{ u_i, 1\le i\le k\right\}, \quad  V_k := \left\{ v_i, 1\le i\le k\right\},
   \qquad 1\le k\le m,
\]
and introduce the events
\[
   A_k := \left\{ u_{k} \not \in U_{k-1},  v_{k} \not \in V_{k-1} \right\}, \quad 1\le k\le m.
\]
It is obvious that for each $k$
\[
  \P(A_k) \ge 1-\frac{2m}{n},
\]
hence by $m \ll n$
\[
   \E \left(\sum_{k=1}^m  \ed{A_k} \right) \ge m \left( 1-\frac{2m}{n} \right) = m \, (1+o(1)).
\]
On the other hand, we have
\begin{equation}\label{bound_Ak}
    \max_\sigma \sum_{i=1}^n X_{i\sigma(i)} \ge \sum_{k=1}^m  \ed{A_k},
\end{equation}
which entails the desired
\[
    \E \max_\sigma \sum_{i=1}^n X_{i\sigma(i)} \ge  m \, (1+o(1)). \qedhere
\]
\end{proof}
\medskip

{\bf Acknowledgements.} The work of M.\ Lifshits was supported by RSF grant
21-11-00047.
G.\ Mordant gratefully acknowledges the support of the DFG within SFB 1456.
%%%%%%%%%%%%%%%%%%%%%%%%%%%%%%%%%%%%%%%%%%%%%%%%%%%%%%%%%%%%%%%%%%%%%%%%%%%%%%%%%%%%%%%%%%%%%

\end{document}